\documentclass[10pt]{amsart}
\usepackage{amsfonts,amssymb,amscd,amsmath,enumerate,verbatim,calc}
\textwidth=14.00cm
\textheight=22.00cm
\topmargin=0.00cm
\oddsidemargin=1.00cm
\evensidemargin=1.00cm
\headheight=14.4pt
\headsep=1cm \numberwithin{equation}{section}
\hyphenation{semi-stable}
\emergencystretch=10pt


\newtheorem{thm}{Theorem}[section]
\newtheorem{cor}[thm]{Corollary}
\newtheorem{lem}[thm]{Lemma}


\newcommand{\Ann}{\mbox{Ann}\,}

\newcommand{\Hom}{\mbox{Hom}\,}
\newcommand{\Ext}{\mbox{Ext}\,}
\newcommand{\Tor}{\mbox{Tor}\,}

\newcommand{\Tot}{\mbox{Tot}\,}

\renewcommand{\dim}{\mbox{dim}\,}

\newcommand{\cd}{\mbox{cd}\,}
\newcommand{\q}{\mbox{q}\,}

\newcommand{\im}{\mbox{Im}\,}

\renewcommand{\H}{\mbox{H}}

\newcommand{\lo}{\longrightarrow}

\newcommand{\fa}{\mathfrak{a}}
\newcommand{\fb}{\mathfrak{b}}

\newcommand{\fm}{\mathfrak{m}}

\newcommand{\C}{C}

\bibliographystyle{amsplain}
\begin{document}

\title[Torsion functors of local cohomology modules]
 {Torsion functors of local cohomology modules}

\bibliographystyle{amsplain}

     \author[M. T. Dibaei]{Mohammad T. Dibaei$^{1}$}
     \author[A. Vahidi]{Alireza Vahidi$^{2}$}

\address{$^{1}$ Faculty of Mathematical Sciences and Computer, Tarbiat Moallem University, Tehran, Iran; and
School of Mathematics, Institute for Research in Fundamental
Sciences (IPM), P.O. Box: 19395-5746, Tehran, Iran.}
\email{dibaeimt@ipm.ir}

\address{$^{2}$ Payame Noor University (PNU), Iran.}
\email{vahidi.ar@gmail.com}

\keywords{coatomic modules, local cohomology modules, minimax modules, spectral sequences.\\
The research of the first author was in part supported by a grant
from IPM (No. 88130126).}

\subjclass[2000]{13D45, 13D07, 13C12.}


\begin{abstract}
Through a study of torsion functors of local cohomology modules
we improve some non-finiteness results on the top non-zero local
cohomology modules with respect to an ideal.
\end{abstract}

\maketitle

\section{Introduction}
Let $R$ be a commutative Noetherian ring with non-zero identity. We use symbols
$\fa$, $M$, and $X$ as an ideal of $R$, a finite (i.e. finitely generated)
$R$--module, and an arbitrary $R$--module which is not necessarily finite.
The $i$th local cohomology module of $X$ with respect to $\fa$ is denoted
by $\H_\fa^i(X)$.

For all $i\geq 0$, it is well known that $\H^i_\fm(M)$ is Artinian for any
maximal ideal $\fm$ of $R$. In particular, $\Hom_R(R/\fm, \H^i_\fm(M))$ is finite.
Grothendieck asked, in \cite {G}, whether a similar statement is valid if
$\fm$ is replaced by an arbitrary ideal of $R$. Hartshorne gave a counterexample
in \cite {Ha2} and raised the question whether $\Ext^{i}_{R}(R/\fa, \H^{j}_{\fa}(M))$
is finite for all $i$ and $j$, and proved this is the case when $R$ is a
complete regular local ring and $\dim(R/\fa)= 1$. This result was later extended to
more general rings by Delfino and Marley  (\cite [Theorem 1]{DM}).

For an $R$--module $X$, Melkersson \cite [Theorem 2.1]{Mel} proved that
$\Ext_R^i(R/\fa, X)$ is finite for all $i$ if and only if
$\Tor_i^R(R/\fa, X)$ is finite for all $i$. Summarizing the above results,
we see that for any ideal $\fa$ of $R$ with $\dim(R/\fa)\leq 1$,
$\Tor_i^R(R/\fa, \H_\fa^j(M))$ is finite for all $i$ and $j$.
This result inspired us to study $\Tor_i^R(R/\fa, \H_\fa^j(X))$ in general for an
arbitrary $R$--module $X$.
Note that there are some attempts to study $\Tor_0^R(R/\fa, \H_\fa^j(X))$ in \cite {AT} and
$\Tor_i^R(R/\fa, \H_\fa^j(M))$ in \cite {KH}.

In Section 2, we present some technical results (Lemma \ref {2-1} and Theorem \ref {2-2}) which
show that, in certain situation, the torsion module $\Tor_i^R(R/\fa, \H_\fa^j(X))$
is in a Serre subcategory of the category of $R$--modules. Recall that $\mathcal{S}$ is a Serre
subcategory of the category of $R$--modules if for any exact sequence
\begin{equation}\label{1--1}
0\longrightarrow X'\longrightarrow X\longrightarrow X''\longrightarrow 0
\end{equation}
the module $X$ is in $\mathcal{S}$ if and only if $X'$ and $X''$ are in $\mathcal{S}$.
Always, $\mathcal{S}$ stands for a Serre subcategory of the category of $R$--modules.

Section 3 consists of applications. In Corollary \ref {3-3}, we show that, for certain integer $i$, $\H_\fa^i(X)$ may not be finite, coatomic, or minimax. Recall that, an $R$--module $X$ is said to be {\it coatomic} (resp. {\it minimax}) if any submodule of $X$ is contained in a maximal submodule of $X$ (resp. if there is a finite submodule $X'$ of $X$ such that $X/X'$ is Artinian). Finally, we show that, for a positive integer $n$, the statement ``$\H_\fa^i(X)$ is coatomic for all $i\geq n$'' is equivalent to each of the statements ``$\H_\fa^i(X)$ is finite for all $i\geq n$'' and ``$\H_\fa^i(X)= 0$ for all $i\geq n$''; also the statement ``$\H_\fa^i(X)$ is minimax for all $i\geq n$''  is equivalent to the statement ``$\H_\fa^i(X)$ is Artinian for all $i\geq n$'' (Corollaries \ref {3-4} and \ref {3-5}).\\

\section{Main result}
In this section, $c$ denotes the arithmetic rank of the ideal $\fa$,
so that there exist elements $x_1, \cdots, x_c$ of $R$ such that
$\sqrt{\fa}= (x_1, \cdots, x_c)$, also $\C(X)^\bullet$ denotes the
\v{C}ech complex of $X$ with respect to $x_1, \cdots, x_c$. It is
well known that the $i$th cohomology module of $\C(X)^\bullet$ is
isomorphic to the $i$th local cohomology module $\H_\fa^i(X)$ (see
\cite [Theorem 5.1.19] {BS}).



Our method is based on the following lemma. We adopt the notation as in \cite {Rot}.

\begin{lem} \label {2-1} Assume that $X$ and $N$ are $R$--modules such that $N$ is
$\fa$-torsion. Then there is a first quadrant spectral sequence
\begin{equation} \label{2--1}
E^{2}_{p, q}:= \emph{\Tor}^{R}_{p}(N, \emph{\H}^{c-q}_{\fa}(X))
\underset{p}\Longrightarrow \emph{\Tor}^{R}_{p+q-c}(N, X).
\end{equation}
\end{lem}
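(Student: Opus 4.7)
The plan is to build the standard hyperhomology double complex from a projective resolution of $N$ and the \v{C}ech complex $C(X)^\bullet$, and then read off the two associated spectral sequences. More precisely, choose a projective resolution $P_\bullet \to N$ and form the first-quadrant bicomplex
\begin{equation*}
K_{p,q} := P_p \otimes_R C(X)^{c-q}, \qquad 0 \le q \le c,\ p \ge 0,
\end{equation*}
so that both differentials lower the total degree. Each row $P_p\otimes_R C(X)^{c-\bullet}$ is flat-exact away from the cohomology $\H_\fa^{c-q}(X)$, and each column $P_\bullet\otimes_R C(X)^{c-q}$ computes $\Tor$ of $N$ against a direct sum of localizations of $X$. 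Both associated spectral sequences converge to the homology $H_*(\Tot K_{\bullet,\bullet})$, and the claim follows from computing them in turn.

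First I would run the spectral sequence obtained by taking homology in the $p$-direction. Because each $C(X)^{c-q}$ is a direct sum of localizations $X_{x_{i_1}\cdots x_{i_k}}$, and each $R_{x_{i_1}\cdots x_{i_k}}$ is flat over $R$, one has $\Tor_p^R(N, X_{x_{i_1}\cdots x_{i_k}}) \cong \Tor_p^R(N, X)_{x_{i_1}\cdots x_{i_k}}$. The key vanishing step is the observation that $\Tor_p^R(N,X)$ is $\fa$-torsion (write $N$ as the direct limit of its finite $\fa$-torsion submodules and commute $\Tor$ with the limit), hence is killed by localization at any $x_{i_j}\in\sqrt{\fa}$. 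Therefore the $E^1$-page of this spectral sequence is concentrated in the single row $q=c$, where it equals $\Tor_p^R(N,C(X)^0) = \Tor_p^R(N,X)$. The spectral sequence collapses and yields $H_n(\Tot K_{\bullet,\bullet}) \cong \Tor_{n-c}^R(N,X)$.

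Next I would run the spectral sequence obtained by taking homology in the $q$-direction first. Since each $P_p$ is projective, hence flat, the cohomology of $P_p\otimes_R C(X)^\bullet$ is $P_p \otimes_R \H_\fa^{c-q}(X)$, giving
\begin{equation*}
{}''E^1_{p,q} = P_p \otimes_R \H_\fa^{c-q}(X).
\end{equation*}
Taking the remaining homology in the $p$-direction then yields
\begin{equation*}
{}''E^2_{p,q} = \Tor_p^R\bigl(N,\H_\fa^{c-q}(X)\bigr),
\end{equation*}
filtered by $p$ and abutting to the same total homology. Combining with the computation from the first spectral sequence produces exactly the stated abutment $\Tor_{p+q-c}^R(N,X)$.

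The only real subtlety is the $\fa$-torsion vanishing step in the first spectral sequence; everything else is standard bookkeeping with the two hyperhomology spectral sequences of a first-quadrant bicomplex and their convergence to the homology of the total complex.
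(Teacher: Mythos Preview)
Your proof is correct and follows essentially the same route as the paper: both form the bicomplex $P_p\otimes_R C(X)^{c-q}$ (the paper writes it as $C(F_p\otimes_R X)^{c-q}$, which is the same object) and run the two associated spectral sequences, using that $\Tor^R_p(N,X)$ is $\fa$-torsion to collapse one of them. The only cosmetic difference is that you observe this collapse already at the $E^1$-page via the vanishing $\Tor_p^R(N,X)_{x_{i_1}\cdots x_{i_k}}=0$, whereas the paper passes to $E^2$ and phrases the same fact as $\H^{c-p}_\fa(\Tor^R_q(N,X))=0$ for $p\neq c$.
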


\begin{proof} Let $F_{\bullet}$ be a free resolution of $N$
and consider the first quadrant bicomplex $\mathcal{T}=
\{\C(F_{p}\otimes_{R} X)^{c-q}\}$. We denote the total complex of $\mathcal{T}$ by $\Tot(\mathcal{T})$. The first filtration has $E^{2}$
term the iterated homology $H'_{p}H''_{p, q}(\mathcal{T})$. By \cite
[Theorem 5.1.19] {BS}, we have
$$H''_{p, q}(\mathcal{T})= H^{c-q}(\C(F_{p}\otimes_{R} X)^{\bullet})= H^{c-q}_{\fa}(F_{p}\otimes_{R} X)= F_{p}\otimes_{R} H^{c-q}_{\fa}(X).$$
Hence
$$^{I}\!E^{2}_{p, q}= H_{p}(F_{\bullet}\otimes_{R} \H^{c-q}_{\fa}(X))= \Tor^{R}_{p}(N, H^{c-q}_{\fa}(X)).$$

On the other hand, the second filtration has
$E^{2}$ term the iterated homology $H''_{p}H'_{q, p}(\mathcal{T})$.
We have
$$H'_{q, p}(\mathcal{T})= H_{q}(\C(R)^{c-p}\otimes_{R}F_\bullet\otimes_R X)= \C(R)^{c-p}\otimes_{R} H_{q}(F_\bullet\otimes_R X)= \C(\Tor^{R}_{q}(N, X))^{c-p}.$$
Thus, again by \cite [Theorem 5.1.19] {BS},
$$^{II}\!E^{2}_{p, q}= H^{c-p}(\C(\Tor^{R}_{q}(N, X))^{\bullet})= H^{c-p}_{\fa}(\Tor^{R}_{q}(N, X)).$$
Since $\Tor^{R}_{q}(N, X)$ is $\fa$--torsion for all $q$,
$$^{II}\!E^{2}_{p, q}\cong \left\lbrace
           \begin{array}{c l}
              \Tor^{R}_{q}(N, X)\ \   & \text{if  \ \ $p= c$,}\\
              0\ \   & \text{if  \ \ $p\neq c$.}
           \end{array}
        \right.$$
Therefore this spectral sequence collapses at the $c$th column and
so
$$H_{p+q}(\Tot (\mathcal{T}))=\ ^{II}\!E^{2}_{c, p+q-c}= \Tor^{R}_{p+q-c}(N, X)$$
which yields the assertion.
\end{proof}



It is our main object to find out when a torsion functor of a local cohomology module
is in a Serre subcategory $\mathcal{S}$. Note that the following subcategories are examples of
Serre subcategories of the category of $R$--modules:
finite $R$--modules; Artinian $R$--modules; coatomic $R$--modules (\cite {Z2}); minimax $R$--modules (\cite {Z}); and
trivially the zero $R$--module. In the following theorem, we find some sufficient conditions for
this purpose.

\begin{thm} \label {2-2} Suppose that $X$ and $N$ are $R$--modules such that $N$ is $\fa$--torsion.
Assume also that $s, t$ are non-negative integers such that
               \begin{itemize}
                   \item[(i)]{$\emph{\Tor}^{R}_{s- t}(N, X)$ is in $\mathcal{S}$,}
                   \item[(ii)]{$\emph{\Tor}^{R}_{s- t+ i- 1}(N, \emph{\H}^{i}_{\fa}(X))$ is in
                   $\mathcal{S}$ for all $i$, $0\leq i\leq t- 1,$ and}
                   \item[(iii)]{$\emph{\Tor}^{R}_{s- t+ i +1}(N, \emph{\H}^{i}_{\fa}(X))$ is in
                   $\mathcal{S}$ for all $i$, $t+ 1\leq i\leq c.$}
               \end{itemize}
Then $\emph{\Tor}^{R}_{s}(N, \emph{\H}^{t}_{\fa}(X))$ is in
$\mathcal{S}$.
\end{thm}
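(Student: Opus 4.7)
The plan is to exploit the convergent first-quadrant spectral sequence of Lemma \ref{2-1}, focusing on the single position $(p,q) = (s,\, c-t)$, whose $E^2$-term is precisely $E^2_{s,\, c-t} = \Tor^R_s(N, \H^t_\fa(X))$. Write $A^r := E^r_{s,\, c-t}$, $B^r := E^r_{s-r,\, c-t+r-1}$ for the target of the outgoing differential on page $r$, and $C^r := E^r_{s+r,\, c-t-r+1}$ for the source of the incoming one, so that
\[
A^{r+1} \;=\; \Ker\bigl(d^r: A^r \to B^r\bigr) \big/ \im\bigl(d^r: C^r \to A^r\bigr).
\]
My strategy is to verify $B^r, C^r \in \mathcal{S}$ for every $r \geq 2$, use this to deduce that $A^r \in \mathcal{S}$ if and only if $A^{r+1} \in \mathcal{S}$, and finally to pin down $A^\infty$ via hypothesis (i) and descend.

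To place $B^r, C^r$ in $\mathcal{S}$, I would reindex (ii) by $i = t-r+1$, turning it into the assertion $\Tor^R_{s-r}(N, \H^{t-r+1}_\fa(X)) \in \mathcal{S}$ for $2 \leq r \leq t+1$, which is exactly $B^2$ in that range; for $r$ outside this range, either $s-r < 0$ or the cohomological index $t-r+1$ lies outside $[0,c]$, so $B^2 = 0$ already. Symmetrically, reindexing (iii) by $i = t+r-1$ places $C^2 = \Tor^R_{s+r}(N, \H^{t+r-1}_\fa(X))$ in $\mathcal{S}$ for $2 \leq r \leq c-t+1$, and $C^2$ vanishes beyond. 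Since any Serre subcategory is closed under subquotients, the higher-page terms $B^r$ and $C^r$, being subquotients of $B^2$ and $C^2$, lie in $\mathcal{S}$ for all $r \geq 2$.

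With that in hand, the short exact sequences $0 \to \im(d^r: C^r \to A^r) \to \Ker(d^r: A^r \to B^r) \to A^{r+1} \to 0$ and $0 \to \Ker(d^r: A^r \to B^r) \to A^r \to \im(d^r: A^r \to B^r) \to 0$, together with the inclusion $\im(d^r: A^r \to B^r) \hookrightarrow B^r$ and the surjection $C^r \twoheadrightarrow \im(d^r: C^r \to A^r)$, force by the defining property of $\mathcal{S}$ the equivalence $A^r \in \mathcal{S} \iff A^{r+1} \in \mathcal{S}$. For $r$ larger than $\max\{s,\, c-t+1\}$ both $B^r$ and $C^r$ vanish, so $A^r$ stabilizes to $E^\infty_{s,\, c-t}$, which is a subquotient of the abutment $\Tor^R_{s-t}(N, X)$, itself in $\mathcal{S}$ by (i). Iterating the equivalence down to $r = 2$ gives $A^2 \in \mathcal{S}$, as desired. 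The only delicate point is the index bookkeeping that aligns (ii) and (iii) with the exact positions of the incoming and outgoing differentials on every page and confirms the vanishing of $B^r, C^r$ off-range; once that is set up, the Serre formalism closes the argument automatically.
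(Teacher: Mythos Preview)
Your argument is correct and follows essentially the same route as the paper: both proofs track the single spot $(s,\,c-t)$ through the spectral sequence of Lemma~\ref{2-1}, use hypotheses (ii) and (iii) to place the $E^2$-terms at the targets and sources of all differentials into $\mathcal{S}$, pass to subquotients to handle higher pages, and then invoke hypothesis (i) on the abutment $\Tor^R_{s-t}(N,X)$ to conclude that $E^\infty_{s,\,c-t}\in\mathcal{S}$ and descend. One small notational wrinkle: your symbols $B^r$ and $C^r$ sit at positions that move with $r$, so writing ``$B^2$'' for $E^2_{s-r,\,c-t+r-1}$ is a mild abuse; the paper avoids this by naming the kernel and image at $(s,c-t)$ directly, but the content is the same.
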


\begin{proof} We may assume that $t\leq c$. Set $u= c- t$, $n= s+u$, and consider the spectral
sequence (\ref {2--1}). For all $r\geq 2$, let $Z_{s,
u}^{r}= \ker(E_{s, u}^{r} \longrightarrow E_{s- r, u+ r -1}^{r})$
and $B_{s, u}^{r}= \im(E_{s+ r, u- r+1}^{r}\longrightarrow E_{s,
u}^{r})$. So that we have the exact sequences:
$$0\longrightarrow Z_{s, u}^{r}\longrightarrow E_{s, u}^{r}\longrightarrow E_{s, u}^{r}/Z_{s, u}^{r}
\longrightarrow 0$$ and
$$0\longrightarrow B_{s, u}^{r}\longrightarrow Z_{s, u}^{r}\longrightarrow E_{s, u}^{r+ 1}\longrightarrow 0.$$
Note that $E_{s- r, u+ r -1}^{2}$ and $E_{s+ r, u- r+1}^{2}$ are in
$\mathcal{S}$ by assumptions (ii) and (iii), so that their
subquotients $E_{s- r, u+ r -1}^{r}$ and $E_{s+ r, u- r+1}^{r}$,
respectively, are also in $\mathcal{S}$. Thus $E_{s, u}^{r}/Z_{s,
u}^{r}$ and $B_{s, u}^{r}$ are in $\mathcal{S}$. It follows by the
above exact sequences that if $E_{s, u}^{r+ 1}$ is in $\mathcal{S}$,
then $E_{s, u}^{r}$ is in $\mathcal{S}$.

As we have $E_{s+ r, u- r+1}^{r}= 0= E_{s- r, u+ r- 1}^{r}$ for all
$r \geq s+ u+ 2$, we obtain $E_{s, u}^{\infty}= E_{s, u}^{s +u+ 2}$.
To complete the proof, it is enough to show that $E_{s, u}^{\infty}$
is in $ \mathcal{S}$.

There exists a finite filtration
$$0= \phi^{-1}H_{n}\subseteq \phi^{0}H_{n}\subseteq \cdots\subseteq \phi^{n- 1}H_{n}\subseteq
\phi^{n}H_{n}= \Tor^{R}_{s- t}(N, X)$$ such that $E_{r,
n-r}^{\infty}= \phi^{r}H_{n}/\phi^{r-1}H_{n}$ for all $r$, $0\leq
r\leq n$. Since $\Tor^{R}_{s- t}(N, X)$ is in $\mathcal{S}$,
$\phi^{s}H_{n}$ is also in $\mathcal{S}$. Thus $E_{s, u}^{\infty}=
\phi^{s}H_{n}/\phi^{s- 1}H_{n}$ is in $ \mathcal{S}$ as we desired.
\end{proof}

\section{Applications}



One can use Theorem \ref {2-2} to study some sufficient conditions
for finiteness of torsion functors of local cohomology modules. This
is the subject of \cite [Theorem 4.1] {KH} which shows that, for
given integers $s, t$ and given ideals $\fa\subseteq \fb$,
$\Tor^{R}_{s}(R/\fb, \H^{t}_{\fa}(M))$ is finite whenever $M$ is a
finite $R$--module with $\dim_R(M)< \infty$, $\Tor^{R}_{s- t+ i-
1}(R/\fb, \H^{i}_{\fa}(M))$ is finite for all $i< t$, and
$\Tor^{R}_{s- t+ i +1}(R/\fb, \H^{i}_{\fa}(M))$ is finite for all
$i> t$. In the following, we prove this theorem without assuming
that $M$ is finite and with no restrictions on dimension of $M$.

\begin{cor} \label {3-1} \emph{(cf.} \cite [Theorem 4.1]{KH}\emph{)}
Suppose that $X$ and $N$ are $R$--modules such that $N$ is
$\fa$--torsion. Assume also that $s, t$ are non-negative integers
such that
               \begin{itemize}
                   \item[(i)]{$\emph{\Tor}^{R}_{s- t}(N, X)$ is finite,}
                   \item[(ii)]{$\emph{\Tor}^{R}_{s- t+ i- 1}(N, \emph{\H}^{i}_{\fa}(X))$ is finite for all $i$, $0\leq i\leq t- 1,$ and}
                   \item[(iii)]{$\emph{\Tor}^{R}_{s- t+ i +1}(N, \emph{\H}^{i}_{\fa}(X))$ is finite for all $i$, $t+ 1\leq i\leq c.$}
               \end{itemize}
Then $\emph{\Tor}^{R}_{s}(N, \emph{\H}^{t}_{\fa}(X))$ is finite.
\end{cor}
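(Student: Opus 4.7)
The plan is to recognize Corollary \ref{3-1} as an immediate specialization of Theorem \ref{2-2}, once we identify the appropriate Serre subcategory. Specifically, I would take $\mathcal{S}$ to be the full subcategory of finite (finitely generated) $R$-modules.

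The first step is to verify that finite $R$-modules form a Serre subcategory of the category of $R$-modules, i.e.\ that in a short exact sequence of the form (\ref{1--1}), $X$ is finite if and only if both $X'$ and $X''$ are finite. This is standard: finiteness is clearly inherited by quotients, an extension of two finite modules is finite, and since $R$ is Noetherian, a submodule of a finite module is finite. This fact is already asserted in the paragraph immediately preceding Theorem \ref{2-2}.

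Once this is observed, I would simply translate the three hypotheses (i), (ii), (iii) of Corollary \ref{3-1} into the corresponding hypotheses of Theorem \ref{2-2} with $\mathcal{S}$ the subcategory of finite $R$-modules; they match verbatim, since ``finite'' has been substituted for ``in $\mathcal{S}$'' throughout. Applying Theorem \ref{2-2} then yields that $\mathrm{Tor}^{R}_{s}(N, \mathrm{H}^{t}_{\fa}(X))$ lies in $\mathcal{S}$, i.e.\ is a finite $R$-module.

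There is essentially no obstacle here: the corollary is a direct instantiation, and all the spectral-sequence work has been absorbed into Theorem \ref{2-2}. The only thing worth emphasizing in the write-up is the comparison with \cite[Theorem 4.1]{KH}, namely that we have removed both the hypothesis that $M$ be finite and the restriction on its dimension, and that we allow $N$ to be an arbitrary $\fa$-torsion module rather than $R/\fb$ for an ideal $\fb \supseteq \fa$.
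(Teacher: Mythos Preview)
Your proposal is correct and matches the paper's own proof exactly: the paper simply takes $\mathcal{S}$ to be the subcategory of finite $R$--modules in Theorem \ref{2-2} and concludes. Your additional remarks on why finite modules form a Serre subcategory and on the comparison with \cite[Theorem 4.1]{KH} are accurate elaborations of points the paper treats as understood.
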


\begin{proof}
In Theorem \ref{2-2}, take $\mathcal{S}$ to be the subcategory of
finite $R$--modules. The result follows.
\end{proof}



Let $n$ be a positive integer and $\H_\fa^i(X)$ is in $\mathcal{S}$ for all $i> n$.
In \cite [Theorem 3.1] {AT}, it is shown that $\H^{n}_{\fa}(X)/\fa \H^{n}_{\fa}(X)$
is in $\mathcal{S}$ whenever $X$ is a weakly Laskerian $R$--module (i.e. the set of
associated primes of any quotient module of $X$ is finite) and $X$ has finite Krull
dimension. In the first part of the following result, we generalize the statement by
removing all conditions on $X$.

\begin{cor} \label {3-2} Let $X$ be an $R$--module and let $\mathcal{S}$ be a Serre subcategory of the
category of $R$--modules such that, for a given integer $n$,
$\emph{\H}^{i}_{\fa}(X)$ is in $\mathcal{S}$ for all $i> n$. Assume
that $N$ is an $\fa$--torsion finite $R$--module and that $\fb$ is
an ideal of $R$ with $\fa\subseteq \sqrt{\fb}$. Then the following
statements hold true.
               \begin{itemize}
                   \item[(i)]{If $n> 0$, then $N\otimes_{R} \emph{\H}^{n}_{\fa}(X)$ is in $\mathcal{S}$.
                   In particular, $\emph{\H}^{n}_{\fa}(X)/\fb\emph{\H}^{n}_{\fa}(X)$
                   is in $\mathcal{S}.$}
                   \item[(ii)]{If $n> 1$, then $\emph{\Tor}^{R}_{1}(N, \emph{\H}^{n}_{\fa}(X))$ is in $\mathcal{S}$.
                   In particular, $\emph{\Tor}^{R}_{1}(R/\fb, \emph{\H}^{n}_{\fa}(X))$
                   is in $\mathcal{S}.$}
               \end{itemize}
\end{cor}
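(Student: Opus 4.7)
The plan is to derive both (i) and (ii) directly from Theorem \ref{2-2} by suitable choices of $s$ and $t$. Before doing so, observe that the containment $\fa\subseteq \sqrt{\fb}$ forces $R/\fb$ to be $\fa$--torsion (some power of each element of $\fa$ lies in $\fb$), so the ``in particular'' clauses will follow from the general statements by specializing $N= R/\fb$ (which is finite) and using the identification $R/\fb\otimes_R \H^n_\fa(X)\cong \H^n_\fa(X)/\fb\H^n_\fa(X)$.

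For part (i) I would take $s= 0$ and $t= n$ in Theorem \ref{2-2}. Condition (i) of the theorem reads $\Tor^{R}_{-n}(N, X)\in \mathcal{S}$, which is automatic since $n>0$ forces the Tor to vanish. Condition (ii) asks for $\Tor^{R}_{i- n- 1}(N, \H^{i}_{\fa}(X))\in \mathcal{S}$ in the range $0\le i\le n-1$, where every index $i-n-1$ is negative, so again the Tors vanish. Condition (iii) is the only substantive one: one needs $\Tor^{R}_{i-n+1}(N, \H^{i}_{\fa}(X))\in \mathcal{S}$ for $n+1\le i\le c$. By the standing hypothesis $\H^{i}_{\fa}(X)\in \mathcal{S}$ for such $i$; and since $N$ is finite over the Noetherian ring $R$ it admits a free resolution by finitely generated free modules, so $\Tor^{R}_{j}(N, M)$ is a subquotient of a finite direct sum of copies of $M$ and therefore lies in $\mathcal{S}$ whenever $M$ does. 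Theorem \ref{2-2} then yields $N\otimes_R \H^{n}_{\fa}(X)= \Tor^{R}_{0}(N, \H^{n}_{\fa}(X))\in \mathcal{S}$.

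For part (ii) the same template works with $s= 1$ and $t= n$. Now $\Tor^{R}_{1-n}(N, X)= 0$ since $n>1$; the indices $i-n$ appearing in condition (ii) of Theorem \ref{2-2} are at most $-1$ in the relevant range, so those Tors vanish as well; and condition (iii) involves $\Tor^{R}_{i-n+2}(N, \H^{i}_{\fa}(X))$ with $i>n$, which is handled by the Serre-stability observation used above. Theorem \ref{2-2} then delivers $\Tor^{R}_{1}(N, \H^{n}_{\fa}(X))\in \mathcal{S}$. The only non-routine ingredient in the whole argument is the stability of Serre subcategories under $\Tor$ with a finite module; once that is noted, the remainder of the proof is bookkeeping to confirm that the numerical conditions of Theorem \ref{2-2} collapse, in each case, to the standing hypothesis on the $\H^{i}_{\fa}(X)$ for $i>n$.
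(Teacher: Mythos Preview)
Your proof is correct and follows exactly the paper's approach: apply Theorem~\ref{2-2} with $t=n$ and $s=0$ for (i), $s=1$ for (ii). You have simply spelled out the verifications the paper leaves implicit, including the key observation that $\Tor^{R}_{j}(N,-)$ preserves $\mathcal{S}$ when $N$ is finite, which is needed to check condition~(iii) of Theorem~\ref{2-2}.
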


\begin{proof} Put $t= n$  in Theorem \ref{2-2}. For the first part take $ s= 0$; and, for the second part, take $s= 1$.
\end{proof}



In the course of the remaining parts of the paper by
$\cd_{\mathcal{S}}(\fa, X)$ ($\mathcal{S}$--cohomological dimension
of $X$ with respect to $\fa$) we mean the largest integer $i$ in
which $\H^{i}_{\fa}(X)$ is not in $\mathcal{S}$ (see \cite
[Definition 3.4] {AT} or  \cite [Definition 3.5] {AM}). If
$\mathcal{S}=0$, then $\cd_{\mathcal{S}}(\fa, X)= \cd(\fa, X)$ as in
\cite{Ha1}. When $\mathcal{S}$ is the category of Artinian
$R$--modules, we write $\q_{\fa}(X):= \cd_{\mathcal{S}}(\fa, X)$.
Note that  $\q_{\fa}(X)= \q(\fa, X)$ if $R$ is local as in
\cite[Definition 3.1]{DY1}.

As an application of Corollary \ref {3-2}, we bring the following result
which is essentially about non-finiteness of $\H^{\tiny\cd_{\mathcal{S}}(\fa, X)}_{\fa}(X)$
where $X$ is an arbitrary $R$--module. In \cite [Theorem 3.2] {H}, it is shown that
$\H^{\tiny\cd(\fa, X)}_{\fa}(X)$ is not coatomic whenever $0< \cd(\fa, X)= \cd(\fa, R/\Ann(X))$.
In the second part of the following result, the equality condition is removed.

\begin{cor} \label {3-3} For an arbitrary $R$--module $X$, the following statements hold true.
               \begin{itemize}
                   \item[(i)]{If $\emph{\cd}_{\mathcal{S}}(\fa, X)> 0$, then $\emph{\H}^{\emph{\cd}_{\mathcal{S}}
                   (\fa, X)}_{\fa}(X)/T$ is not finite for any submodule $T$ of $\emph{\H}^{\emph{\cd}_{\mathcal{S}}
                   (\fa, X)}_{\fa}(X)$ with $T \in \mathcal{S}.$
                   In particular, $\emph{\H}^{\emph{\cd}_{\mathcal{S}}
                   (\fa, X)}_{\fa}(X)$ is not finite.}
                   \item[(ii)]{If $\emph{\cd}(\fa, X)> 0$,
                   then $\emph{\H}^{\emph{\cd}(\fa, X)}_{\fa}(X)/T$
                   is not coatomic for any proper submodule $T$ of $\emph{\H}^{\emph{\cd}(\fa, X)}_{\fa}(X)$.
                   In particular, $\emph{\H}^{\emph{\cd}(\fa, X)}_{\fa}(X)$ is not
                   coatomic.}
                   \item[(iii)]{If $\emph{\q}_{\fa}(X)> 0$, then $\emph{\H}_{\fa}^{\emph{\q}_{\fa}(X)}(X)/T$
                   is not minimax for any Artinian submodule $T$ of $\emph{\H}_{\fa}^{\emph{\q}_{\fa}(X)}(X)$.
                   In particular, $\emph{\H}_{\fa}^{\emph{\q}_{\fa}(X)}(X)$ is not minimax.}
               \end{itemize}
\end{cor}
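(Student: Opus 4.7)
The plan for all three parts is to argue by contradiction and to reduce to Corollary \ref{3-2}(i), exploiting the fact that the top local cohomology module $Y:= \H^n_{\fa}(X)$ in question is $\fa$--torsion.

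For part (i), I would set $n:= \cd_{\mathcal{S}}(\fa, X)> 0$, so that $\H^i_{\fa}(X)\in \mathcal{S}$ for $i> n$ while $Y\notin \mathcal{S}$, and assume for contradiction that $Y/T$ is finite for some $T\in \mathcal{S}$. Since $Y/T$ is finite and $\fa$--torsion, $\fa^k (Y/T)= 0$ for some $k$, so $\fa^k Y\subseteq T\in \mathcal{S}$ and therefore $\fa^k Y\in \mathcal{S}$. Applying Corollary \ref{3-2}(i) with the $\fa$--torsion finite module $N= R/\fa^k$ gives $Y/\fa^k Y\in \mathcal{S}$, and then the short exact sequence $0\to \fa^k Y\to Y\to Y/\fa^k Y\to 0$ forces $Y\in \mathcal{S}$, contradicting the choice of $n$.

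For part (ii), with $n= \cd(\fa, X)> 0$, I would show that $\Hom_R(Y, R/\fm)= 0$ for every maximal ideal $\fm$ of $R$. When $\fa\not\subseteq \fm$, picking $a\in \fa\setminus \fm$ makes multiplication by $a$ invertible on $R/\fm$ but locally nilpotent on the $\fa$--torsion module $Y$, which forces any $R$--homomorphism $Y\to R/\fm$ to vanish. When $\fa\subseteq \fm$, the module $R/\fm$ is $\fa$--torsion and finite, so Corollary \ref{3-2}(i) applied with $\mathcal{S}= 0$ (whose defining hypothesis $\H^i_{\fa}(X)= 0$ for $i> n$ holds by definition of $\cd(\fa, X)$) yields $Y/\fm Y= 0$, and every $R$--map $Y\to R/\fm$ factors through $Y/\fm Y$ and hence vanishes. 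For any proper submodule $T\subsetneq Y$ the nonzero quotient $Y/T$ inherits the property of admitting no nonzero map to any simple $R$--module, so it has no maximal submodule, and therefore it is not coatomic.

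For part (iii), suppose $T$ is an Artinian submodule of $Y= \H^{\q_{\fa}(X)}_{\fa}(X)$ with $Y/T$ minimax. Since minimax $R$--modules form a Serre subcategory containing all Artinians, the extension $Y$ itself is minimax; choose a finite submodule $F\subseteq Y$ with $Y/F$ Artinian, so that $\Supp(Y/F)\subseteq \Max R$. Corollary \ref{3-2}(i) applied with $\mathcal{S}$ the subcategory of Artinian modules and $N= R/\fa$ gives that $Y/\fa Y$ is Artinian, hence $\Supp(Y/\fa Y)\subseteq \Max R$ as well. The decisive step is to show $\Supp(F)\subseteq \Max R$: for any non-maximal $\fp\in \Supp(F)$ one has $\fp\supseteq \fa$ because $F$ is $\fa$--torsion, and localizing at $\fp$ the two support conditions yield $F_{\fp}= Y_{\fp}$ and $Y_{\fp}= \fa R_{\fp}\cdot Y_{\fp}$, so $F_{\fp}= \fa R_{\fp}\cdot F_{\fp}$; Nakayama's lemma applied to the finite $R_{\fp}$--module $F_{\fp}$ then gives $F_{\fp}= 0$, contradicting $\fp\in \Supp(F)$. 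Thus $F$ has finite length and is Artinian, making $Y$ Artinian, which contradicts $\q_{\fa}(X)> 0$. The main obstacle will be this Nakayama step, where the minimax decomposition of $Y$ must be fused with the support constraint on $Y/\fa Y$ supplied by Corollary \ref{3-2}.
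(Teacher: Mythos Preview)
Your arguments for parts (i) and (ii) are correct and essentially coincide with the paper's: in (i) the paper also passes from $\fa^j Y\subseteq T$ to $Y/\fa^j Y\in\mathcal{S}$ via Corollary~\ref{3-2} and then concludes $Y\in\mathcal{S}$; in (ii) the paper likewise derives the contradiction from $Y/\fa Y=0$ (obtained from Corollary~\ref{3-2} with $\mathcal{S}=0$), only it starts by picking a maximal submodule of $Y/T$ rather than checking $\Hom_R(Y,R/\fm)=0$ for all $\fm$.

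For part (iii) your proof is correct but follows a genuinely different route. The paper does \emph{not} first replace $Y/T$ by $Y$; it keeps the minimax decomposition $0\to T'/T\to Y/T\to Y/T'\to 0$ with $T'/T$ finite and $Y/T'$ Artinian, chooses $j$ with $\fa^j(T'/T)=0$, and tensors with $R/\fa^j$ to obtain
\[
\Tor_1^R(R/\fa^j,\,Y/T')\longrightarrow T'/T\longrightarrow Y/(\fa^j Y+T).
\]
The left term is Artinian (a subquotient of a finite direct sum of copies of the Artinian module $Y/T'$) and the right term is Artinian by Corollary~\ref{3-2}; hence $T'/T$ is Artinian, so $Y/T$ and then $Y$ are Artinian. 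Your approach instead uses that minimax modules form a Serre class to pass to $Y$, then combines $\Supp(Y/F)\subseteq\Max R$ and $\Supp(Y/\fa Y)\subseteq\Max R$ with Nakayama at a non-maximal prime to force the finite piece $F$ to have finite length. Your argument is more elementary in that it avoids the $\Tor_1$ computation, while the paper's argument is more uniform with the homological machinery it has set up and does not need the auxiliary observation that $\Supp(F)\subseteq V(\fa)$ or the localization step.
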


\begin{proof} (i) Assume contrarily that $\H^{\tiny\cd_{\mathcal{S}}(\fa, X)}_{\fa}(X)/T$ is
finite. Then there exists an integer $j$ such that $\fa
^{j}(\H^{\tiny\cd_{\mathcal{S}}(\fa, X)}_{\fa}(X)/T)= 0$; that is
$\fa^j\H^{\tiny\cd_{\mathcal{S}}(\fa, X)}_{\fa}(X)\subseteq T$.
On the other hand, by Corollary \ref{3-2}, $\H^{\tiny\cd_{\mathcal{S}}(\fa,
X)}_{\fa}(X)/\fa ^{j}\H^{\tiny\cd_{\mathcal{S}}(\fa, X)}_{\fa}(X)$
is in $\mathcal{S}$ and so its quotient $\H^{\tiny\cd_{\mathcal{S}}(\fa,
X)}_{\fa}(X)/T$ is in $\mathcal{S}$. Therefore
$\H^{\tiny\cd_{\mathcal{S}}(\fa, X)}_{\fa}(X)$ is in $\mathcal{S}$
which contradicts the definition of $\cd_{\mathcal{S}}(\fa, X).$

(ii) Assume that $\H^{\tiny\cd(\fa, X)}_{\fa}(X)/T$ is coatomic.
There exists a maximal submodule $T'/T$ of $\H^{\tiny\cd(\fa, X)}_{\fa}(X)/T$
so that there is an exact sequence
$$0\lo T'/T\lo \H^{\tiny\cd(\fa, X)}_{\fa}(X)/T\lo R/\fm\lo 0$$
for some maximal ideal $\fm$ of $R$, which results the exact
sequence
$$T'/\fa T'+ T\lo \H^{\tiny\cd(\fa, X)}_{\fa}(X)/\fa \H^{\tiny\cd(\fa, X)}_{\fa}(X)+T\lo R/\fm\lo 0$$
if one applies the functor $R/\fa\otimes_R -$. It can be seen either
directly or deduced from Corollary \ref{3-2} that $\H^{\tiny\cd(\fa,
X)}_{\fa}(X)/\fa \H^{\tiny\cd(\fa, X)}_{\fa}(X)= 0$. Therefore its
homomorphic image $\H^{\tiny\cd(\fa, X)}_{\fa}(X)/\fa
\H^{\tiny\cd(\fa, X)}_{\fa}(X)+ T$ is zero. This contradiction shows
that $\H^{\tiny\cd(\fa, X)}_{\fa}(X)/T$ is not coatomic.

(iii) Assume, in contrary, that $\H_{\fa}^{\tiny\q_{\fa}(X)}(X)/T$
is a minimax module; so that there exists an exact sequence
\begin{equation}\label{3.1}
0\lo T'/T\lo \H_{\fa}^{\tiny\q_\fa(X)}(X)/T\lo
\H_{\fa}^{\tiny\q_{\fa}(X)}(X)/T'\lo 0
\end{equation}
such that $T'/T$ is finite and $\H_{\fa}^{\tiny\q_{\fa}(X)}(X)/T'$
is Artinian. There is an integer $j$ such that $\fa^{j}(T'/T)= 0$.
As, by Corollary \ref {3-2}, $\H_{\fa}^{\tiny\q_{\fa}(X)}(X)/\fa^{j}
\H_{\fa}^{\tiny\q_{\fa}(X)}(X)$ is Artinian its quotient
$\H_{\fa}^{\tiny\q_{\fa}(X)}(X)/\fa^{j}
\H_{\fa}^{\tiny\q_{\fa}(X)}(X)+ T$ is also Artinian. Applying the
functor $R/\fa^j\otimes_R -$ to the exact sequence (\ref {3.1})
yields the exact sequence
$$\Tor^R_1(R/\fa^j, \H_{\fa}^{\tiny\q_{\fa}(X)}(X)/T') \lo T'/T\lo
\H_{\fa}^{\tiny\q_{\fa}(X)}(X)/\fa^j \H_{\fa}^{\tiny\q_{\fa}(X)}(X)+
T$$ from which we obtain that $T'/T$ is Artinian. Now, (\ref {3.1})
implies that $\H_{\fa}^{\tiny\q_{\fa}(X)}(X)/T$ is Artinian which
contradicts with the fact that $\H_{\fa}^{\tiny\q_{\fa}(X)}(X)$ is
not Artinian.
\end{proof}



In \cite [Proposition 3.1] {Y}, it is proved that, for a positive integer $n$, $\H_\fa^i(X)= 0$ for all $i\geq n$
whenever $X$ and all modules $\H_\fa^i(X)$, for all $i\geq n$, are finite and the ground ring $R$ is local.
In the following, among other things, we generalize this result for a general ring $R$ and an arbitrary $R$--module $X$.

\begin{cor} \label {3-4} Let $X$ be an arbitrary $R$--module and let
$n$ be a positive integer. Then the following statements are
equivalent.
               \begin{itemize}
                   \item[(i)]{$\emph{\H}^{i}_{\fa}(X)$ is coatomic for all $i\geq n$.}
                   \item[(ii)]{$\emph{\H}^{i}_{\fa}(X)$ is finite for all $i\geq n$.}
                   \item[(iii)]{$\emph{\H}^{i}_{\fa}(X)= 0$ for all $i\geq n$.}
               \end{itemize}
\end{cor}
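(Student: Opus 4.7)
The plan is to verify the cycle (iii) $\Rightarrow$ (ii) $\Rightarrow$ (i) $\Rightarrow$ (iii). The first two implications are automatic: the zero module is finite, and any finite $R$--module is coatomic (every proper submodule of a finite module is contained in a maximal submodule by Nakayama's lemma). All the content of the statement therefore lies in (i) $\Rightarrow$ (iii), and I would reduce this immediately to Corollary \ref{3-3}(ii).

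For (i) $\Rightarrow$ (iii), I would assume that $\H^i_\fa(X)$ is coatomic for every $i\geq n$ and suppose, for a contradiction, that $\H^j_\fa(X)\neq 0$ for some $j\geq n$. Let $d = \cd(\fa, X)$, the largest index $i$ with $\H^i_\fa(X)\neq 0$. Then $d\geq j\geq n\geq 1$, so in particular $\cd(\fa, X) > 0$. Corollary \ref{3-3}(ii), applied to the proper submodule $T = 0$ of $\H^d_\fa(X)$, then says that $\H^d_\fa(X)/0 = \H^d_\fa(X)$ is not coatomic. But $d\geq n$, so hypothesis (i) forces $\H^d_\fa(X)$ to be coatomic, a contradiction. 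Hence $\H^i_\fa(X) = 0$ for all $i\geq n$, giving (iii).

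The only real obstacle is ensuring that Corollary \ref{3-3}(ii) is applicable, i.e. that $\cd(\fa, X) > 0$; this is precisely where the hypothesis that $n$ is a \emph{positive} integer is used. Once the non-coatomicity statement of Corollary \ref{3-3}(ii) is granted, the whole argument collapses to a one-line contradiction, and the spectral-sequence machinery of Theorem \ref{2-2} (which underlies Corollary \ref{3-3}) is doing all the real work.
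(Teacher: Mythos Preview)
Your proof is correct and follows essentially the same route as the paper: both arguments reduce immediately to Corollary~\ref{3-3}, contradicting the coatomic (resp.\ finite) hypothesis at the top nonvanishing index $\cd(\fa,X)$. The only cosmetic difference is that the paper proves the two equivalences (i)$\Leftrightarrow$(iii) and (ii)$\Leftrightarrow$(iii) separately, invoking Corollary~\ref{3-3}(ii) and Corollary~\ref{3-3}(i) respectively, whereas you run a single cycle and use only Corollary~\ref{3-3}(ii), exploiting the trivial implication finite $\Rightarrow$ coatomic to bypass Corollary~\ref{3-3}(i).
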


\begin{proof}
(i) $\Leftrightarrow$ (iii). This is clear from Corollary \ref
{3-3}(ii).

(ii) $\Leftrightarrow$ (iii). It follows from Corollary \ref
{3-3}(i).
\end{proof}



In consistence of Corollary \ref{3-4}, one can state the following
result about Artinian-ness of local cohomology modules from a point
upward.

\begin{cor} \label {3-5} Let $X$ be an arbitrary $R$--module and let
$n$ be a positive integer. Then the following statements are
equivalent.
               \begin{itemize}
                   \item[(i)]{$\emph{\H}^{i}_{\fa}(X)$ is minimax for all $i\geq n$.}
                   \item[(ii)]{$\emph{\H}^{i}_{\fa}(X)$ is Artinian for all $i\geq n$.}
               \end{itemize}
\end{cor}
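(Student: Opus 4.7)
The plan is to mimic the strategy used for Corollary \ref{3-4}: the easy direction is immediate from the definitions, and the hard direction is an almost direct consequence of Corollary \ref{3-3}(iii), once the correct Serre subcategory is chosen.

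For (ii) $\Rightarrow$ (i) I would simply observe that every Artinian $R$--module is minimax (take the finite submodule $X' = 0$ in the defining exact sequence), so the implication is immediate without invoking any of the machinery of Section 2.

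For (i) $\Rightarrow$ (ii) I would take $\mathcal{S}$ to be the Serre subcategory of Artinian $R$--modules, so that $\q_{\fa}(X) = \cd_{\mathcal{S}}(\fa, X)$ is the largest integer $i$ for which $\H^{i}_{\fa}(X)$ is not Artinian (and $\q_{\fa}(X) = -\infty$ if every $\H^{i}_{\fa}(X)$ is Artinian, in which case (ii) is automatic). Assuming (ii) fails, there exists some $i \geq n$ with $\H^{i}_{\fa}(X)$ not Artinian, forcing $\q_{\fa}(X) \geq n \geq 1$, and in particular $\q_{\fa}(X) > 0$. Corollary \ref{3-3}(iii) then yields that $\H^{\q_{\fa}(X)}_{\fa}(X)$ is not minimax. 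On the other hand, since $\q_{\fa}(X) \geq n$, hypothesis (i) forces $\H^{\q_{\fa}(X)}_{\fa}(X)$ to be minimax, a contradiction.

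The only point that might look delicate is the degenerate case where $\H^{i}_{\fa}(X)$ is already Artinian for every $i$; but here there is nothing to prove, so no real obstacle arises. The entire argument rests on the non-minimax-ness statement in Corollary \ref{3-3}(iii), which is precisely the minimax analogue needed to run the same two-line contradiction that powered Corollary \ref{3-4}.
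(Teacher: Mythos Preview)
Your proof is correct and follows exactly the same approach as the paper, which simply records that the result follows from Corollary~\ref{3-3}(iii); you have merely spelled out the routine contradiction argument and the trivial direction in more detail.
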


\begin{proof}
This follows from Corollary \ref {3-3}(iii).
\end{proof}
{\it Acknowledgement.} The authors would like to thank the referee
for her/his comments.

\bibliographystyle{amsplain}

\end{document}